\newtheorem{theorem}{Theorem}[section]
\newtheorem{lemma}[theorem]{Lemma}
\newtheorem{conjecture}[theorem]{Conjecture}
\theoremstyle{definition}
\theoremstyle{remark}
\newtheorem{remark}[theorem]{Remark}
\newtheorem{question}[theorem]{Question}
\numberwithin{equation}{section}
\DeclareMathOperator{\vol}{vol}
\newcommand{\where}{\mathop{\ |\ }\nolimits}
\renewcommand{\epsilon}{\varepsilon}
\renewcommand{\phi}{\varphi}
\renewcommand{\kappa}{\varkappa}
\begin{document}

\title{Viterbo's conjecture for certain Hamiltonians in classical mechanics}

\author{Roman~Karasev{$^{\spadesuit}$}}
\address{Roman Karasev, Dept. of Mathematics, Moscow Institute of Physics and Technology, Institutskiy per. 9, Dolgoprudny, Russia 141700}
\address{Roman Karasev, Institute for Information Transmission Problems RAS, Bolshoy Karetny per. 19, Moscow, Russia 127994}

\email{r\_n\_karasev@mail.ru}
\urladdr{http://www.rkarasev.ru/en/}

\author{Anastasia~Sharipova{$^{\clubsuit}$}}

\address{Anastasia Sharipova, Dept. of Mathematics, Moscow Institute of Physics and Technology, Institutskiy per. 9, Dolgoprudny, Russia 141700}
\address{Anastasia Sharipova, Pennsylvania State University Mathematics Dept., 411 McAllister Building, University Park, State College, PA 16802}

\email{independsharik@yandex.ru}

\thanks{{$^\spadesuit$}{$^\clubsuit$} Supported by the Russian Foundation for Basic Research grant 18-01-00036}
\thanks{{$^\spadesuit$} Supported by the Federal professorship program grant 1.456.2016/1.4}

\subjclass[2010]{52B60, 53D99, 37J10}

\keywords{Hamiltonian systems, periodic trajectories, symplectic volume, Viterbo's conjecture}

\begin{abstract}
We study some particular cases of Viterbo's conjecture relating volumes of convex bodies and actions of closed characteristics on their boundaries, focusing on the case of a Hamiltonian of classical mechanical type, splitting into summands depending on the coordinates and the momentum separately. We manage to establish the conjecture for sublevel sets of convex $2$-homogeneous Hamiltonians of this kind in several particular cases. We also discuss open cases of this conjecture.
\end{abstract}

\maketitle

\section{Introduction}

In this note we consider the Viterbo conjecture~\cite{vit2000}, prove some of its new particular cases, and discuss relevant examples. We start with reminding the statement of the conjecture:

\begin{conjecture}[Viterbo, 2000]
For a convex body $X\subset\mathbb R^{2n}$ the following inequality holds
\[
\vol X \ge \frac{c_{EHZ}(X)^n}{n!}.
\]
\end{conjecture}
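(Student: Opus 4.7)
Since the conjecture as stated is open in full generality (and widely believed to be hard), I would not attack it head on; instead I would follow the philosophy already announced in the title and restrict attention to the tractable subclass of bodies that appear as sublevel sets of \emph{classical mechanical} Hamiltonians, and try to exploit the extra structure. The first step is a standard reduction: by approximating $X$ in the Hausdorff metric by smooth strictly convex bodies, I may assume that $\partial X$ is smooth and strictly convex, and write $X = \{H \le 1\}$ for a $2$-homogeneous, convex Hamiltonian $H$. In this setting $c_{EHZ}(X)$ equals the minimum of the action functional $\tfrac12\oint(p\,dq - q\,dp)$ over closed characteristics on $\partial X$, and Viterbo's inequality becomes a statement comparing the shortest-orbit action with $(n!\,\vol X)^{1/n}$.

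The second step, which is the point of the present paper, is to assume $H$ splits as $H(q,p) = V(q) + T(p)$ with $V,T$ convex and $2$-homogeneous. Then Hamilton's equations become $\dot q = \nabla T(p)$, $\dot p = -\nabla V(q)$, a system with a physical interpretation (kinetic plus potential energy). I would try to show that the extremal closed characteristic can be chosen symmetric under the involution $(q,p)\mapsto(q,-p)$ (i.e.\ the orbit oscillates between two turning points where $p=0$), so that computing $c_{EHZ}$ reduces to a one-dimensional variational problem along a segment in $q$-space, with ``speed'' controlled by the Legendre dual $T^\circ$.

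For the volume side I would use the layer-cake formula
\[
\vol X \;=\; \int_{\{T(p)\le 1\}} \vol\bigl\{\,q : V(q) \le 1 - T(p)\bigr\}\,dp,
\]
which by $2$-homogeneity factors into the product of the volumes of the sublevel sets of $V$ and $T$ times an explicit Beta-function constant. The target inequality therefore becomes an inequality between a product $\vol\{V\le 1\}\cdot\vol\{T\le 1\}$ and the $n$-th power of a min--max quantity extracted from the shortest periodic orbit. This has the flavor of a Mahler/Blaschke-Santal\'o type product inequality and, in favorable cases (for example when $V$ or $T$ is the support function of a simplex, or when one of them is rotationally symmetric), the variational problem for $c_{EHZ}$ should admit a closed-form solution and the inequality should reduce to an elementary, checkable estimate.

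The main obstacle, which is why the result will only hold in special cases, is controlling the shortest closed characteristic: even for classical Hamiltonians the minimizing orbit need not be the symmetric ``bounce'' trajectory described above, so one loses the one-dimensional reduction in general. I expect that the honest bulk of the work will be identifying the geometric hypotheses on $V$ and $T$ (e.g.\ symmetry, special shapes such as zonotopes or simplices, low dimension) under which the symmetric minimizer can be certified, and then verifying the resulting scalar inequality; whereas any attempt to remove such hypotheses runs immediately into the open problem of computing $c_{EHZ}$ for general convex bodies.
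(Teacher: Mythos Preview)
The statement you are addressing is a \emph{conjecture}; the paper does not prove it and makes no such claim, so there is no ``paper's proof'' to match. What the paper actually does is verify the inequality for several restricted classes of Hamiltonians, and your write-up is best read as a proposed strategy for those special cases. On that reading, two substantive discrepancies with the paper deserve comment.

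First, the paper's main device (Theorems~\ref{theorem:even-2-homogeneous} and~\ref{theorem:ellipsoid-in-between}) is not a reduction to a one-dimensional variational problem plus a volume-product estimate. It is an \emph{inscribed symplectic ball} argument: one locates a direction $q_0$ where $V$ is touched from below by a quadratic form, and on the symplectic $2$-plane spanned by $(0,q_0)$ and $(q_0,0)$ the Hamiltonian agrees with a harmonic oscillator to first order. The circle in that plane is then a closed characteristic shared by $\partial X$ and by the boundary of an inscribed ball $X'\subseteq X$; monotonicity gives $c_{EHZ}(X)=c_{EHZ}(X')$ while $\vol X\ge\vol X'$, and the ball is the equality case. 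No Mahler-type inequality enters.

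Second, and more to the point, the paper explicitly runs your ``layer-cake plus symmetric bounce'' program in Section~4.1 for $H=\|p\|_*^2+\|q\|^2$ and shows that it \emph{fails}. The forth-and-back oscillation has action $\pi$; the volume factors via \eqref{equation:ell-two-sum} into $\vol K\cdot\vol K^\circ$ times a Beta constant; and the resulting inequality is Blaschke--Santal\'o \emph{in the reverse direction}, hence false for non-Euclidean norms. The conclusion is that the bounce trajectory is \emph{not} the action minimizer once $T$ is non-Euclidean: for $\|p\|_1^2+\|q\|_\infty^2$ in $\mathbb R^4$ the paper finds the true minimizer by hand (action $4\arcsin\tfrac35<\pi$) via a piecewise-linear analysis of the dynamics. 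So the obstacle you flag at the end is not merely technical---your proposed reduction is provably the wrong one outside the Euclidean-kinetic-energy case, and the paper's successful arguments bypass it entirely.
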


Here we treat the \emph{Ekeland--Hofer--Zehnder capacity} $c_{EHZ}(X)$ as the smallest action of a closed characteristic on the boundary $\partial X$, when $X$ is smooth, and extend it to non-smooth bodies by continuity (say, in the Hausdorff metric). A \emph{characteristic} is a curve $\gamma$ whose velocity $\dot\gamma$ is always in the one-dimensional kernel of the restriction of 
\[
\omega = \sum_{i=1}^n dp_i\wedge dq_i
\]
to the tangent space $T\partial X$. The \emph{action} of such a closed curve is the integral
\[
\int_\gamma \sum_{i=1}^n p_idq_i.
\]
For more details on these definitions and some results see~\cite{vit2000,aaok2013}.

Another point of view on this problem is to consider a proper (tending to $+\infty$ at infinity) convex Hamiltonian $H :\mathbb R^{2n} \to \mathbb R$ and compare the volume of a domain $X=\{H(p,q) \le E\}$ with the smallest action of a closed Hamiltonian trajectory on the boundary $\{H(p,q)=E\}$. Of course, this is just a reformulation, but we will mostly argue in these terms in this note, because we want to concentrate on certain types of Hamiltonians from classical mechanics.

So far Viterbo's conjecture seems rather hard, but one may try to attack its certain particular cases. As it was shown in \cite{aaok2013}, the case of $X = K\times T$ for two convex bodies $K\subset \mathbb R^n$ and $T\subset\mathbb R^n$ (Lagrangian product) is already interesting and for centrally symmetric $K$ and $T=K^\circ$ (the polar convex body) it is equivalent to the still open Mahler conjecture \cite{ma1939},
\[
\vol K\cdot \vol K^\circ \ge \frac{4^n}{n!}.
\]
In the case of the Lagrangian product the capacity $c_{EHZ}(K\times T)$ turns out to be the shortest length of the billiard trajectory in $K$ with length measured by the support function of $T$, or vice versa, see \cite{aao2012}. Nontrivial estimates of these capacities were made in \cite{aaok2013,abksh2014}.

In this work we consider another natural case, related to a Hamiltonian coming from classical mechanics with separated kinetic and potential energies, in the form
\[
H(p,q) = T(p) + V(q).
\]
Strictly speaking, this case already contains the case of Lagrangian products. Indeed, if we take two norms $\|p\|_K$ and $\|q\|_T$ for separate components of the canonical coordinates, and consider 
\[
H(p,q) = \|p\|_K^m + \|q\|_T^m
\]
then, for $m\to+\infty$, the domain $X=\{H(p,q) \le 1\}$ approaches the Lagrangian product of the unit balls of $\|\cdot\|_K$ in $p$ coordinates and $\|\cdot\|_T$ in $q$ coordinates. But in this note we mostly concentrate on the case $m=2$ in the above formula, when the potential and kinetic energies are convex and $2$-homogeneous.

Our results show the validity of Viterbo's conjecture for even $2$-homogeneous Hamiltonians with standard kinetic energy $T(p) = |p|^2/2$, for some other classical even $2$-homogeneous Hamiltonians in Theorems \ref{theorem:ellipsoid-in-between} and \ref{theorem:two-dim-squares}, and in a different situation, when the Hamiltonian is a sum of functions of the pairs $(p_i,q_i)$, Theorem \ref{theorem:sum-two-dim}.

\section{Viterbo's conjecture for some 2-homogeneous Hamiltonians}

Let us show how to handle some cases using the technique similar to~\cite[Section~2]{herm1998}. The first one, considered in this section, deals with the mechanical case and is a slight generalization of the harmonic oscillator, $H = \frac{1}{2}\sum_{i=1}^n p_i^2 + f_i^2 q_i^2$, for which the level surface is an ellipsoid and the capacity is known. The second one in Section \ref{section:in-pairs} has a separation of variables into pairs $(p_i,q_i)$ instead of separating them into $p$ and $q$, but is again a slight generalization of the harmonic oscillator.

In the next theorem we consider the case of the standard kinetic energy and an even $2$-homogeneous potential energy.

\begin{theorem}
\label{theorem:even-2-homogeneous}
Put $T=|p|^2/2$ (the standard kinetic energy, where $|\cdot|$ is the standard Euclidean norm) and let $V$ be proper,  even, and $2$-homogeneous. Then Viterbo's conjecture holds for the sets $\{T(p) + V(q) \le E\}$.
\end{theorem}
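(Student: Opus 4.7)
The plan is to exhibit an explicit short closed characteristic on $\{H=E\}$ and to compute $\vol X$ cleanly using the $2$-homogeneity of $V$; the two estimates will then match up to a factor controlled by a Euclidean ball inscribed in the unit sublevel set of $V$. Since $V$ is proper, convex, even and $2$-homogeneous, $\sqrt{V}$ is a norm, so $V$ is finite and continuous and the maximum
\[
M:=\max_{|v|=1}V(v)
\]
is attained at some $v^*\in S^{n-1}$. Assume first that $V$ is smooth away from the origin. Then $v^*$ is a critical point of $V|_{S^{n-1}}$, and combining the Lagrange condition with Euler's identity $\langle\nabla V(v^*),v^*\rangle=2V(v^*)$ gives $\nabla V(v^*)=2Mv^*$. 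I then try the one-parameter ansatz $q(t)=\alpha(t)v^*$, $p(t)=\dot\alpha(t)v^*$; using $1$-homogeneity of $\nabla V$ (which follows from $2$-homogeneity of $V$ together with evenness), Hamilton's equations reduce to the scalar ODE $\ddot\alpha=-2M\alpha$, whose solution $\alpha(t)=\sqrt{E/M}\cos(\sqrt{2M}\,t)$ gives a closed trajectory of period $\tau=2\pi/\sqrt{2M}$ lying on $\{H=E\}$. Because $H$ is $2$-homogeneous, the virial identity $\int_0^\tau T\,dt=\int_0^\tau V\,dt=E\tau/2$ forces the action $\int_\gamma p\,dq$ to equal $E\tau$, and hence
\[
c_{EHZ}(X)\le \frac{2\pi E}{\sqrt{2M}}.
\]

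For the volume I integrate first in $p$, use $\vol\{|p|^2\le 2(E-V(q))\}=(2\pi(E-V(q)))^{n/2}/\Gamma(n/2+1)$, rescale $q\mapsto\sqrt{E}\,q$ (possible because $V$ is $2$-homogeneous), and pass to polar coordinates in $B_V:=\{V\le 1\}$. A Beta-function computation yields the clean formula
\[
\vol X=\frac{(2\pi)^{n/2}\,\Gamma(n/2+1)}{n!}\,E^{n}\,\vol B_V.
\]
The key geometric inequality is that, by the very definition of $M$, $V(q)\le M|q|^2$ for every $q$, so $B_V$ contains the Euclidean ball of radius $1/\sqrt{M}$, whence $\vol B_V\ge \pi^{n/2}/(M^{n/2}\,\Gamma(n/2+1))$. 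Substituting this into the volume formula gives
\[
\vol X\ge \frac{(2\pi E)^{n}}{n!\,(2M)^{n/2}}=\frac{1}{n!}\left(\frac{2\pi E}{\sqrt{2M}}\right)^{n}\ge \frac{c_{EHZ}(X)^{n}}{n!},
\]
which is Viterbo's inequality.

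The main technical point is handling non-smooth $V$: the maximiser $v^*$ may then lie on a flat face of $B_V$ where $\nabla V$ is set-valued, so the one-dimensional trajectory above is not a classical characteristic of the non-smooth surface $\partial X$. I would deal with this by a standard approximation, replacing the convex symmetric body $B_V$ by smooth strictly convex symmetric bodies $B_\epsilon$ with $B_V\subset B_\epsilon\subset (1+\epsilon)B_V$ and taking $V_\epsilon$ to be the square of the Minkowski functional of $B_\epsilon$; this $V_\epsilon$ is again smooth, convex, even, and $2$-homogeneous, so the smooth argument applies, and Viterbo's inequality passes to the limit by Hausdorff continuity of both $\vol$ and $c_{EHZ}$ recalled in the introduction. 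Equality throughout occurs exactly when $V$ is a positive scalar multiple of $|q|^2$, consistent with the fact that the harmonic oscillator saturates Viterbo's conjecture.
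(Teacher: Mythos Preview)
Your argument is correct and follows essentially the same route as the paper's: both single out the direction $v^*$ (the paper's $q_0$) where $V(q)/|q|^2$ is maximal, recognize the harmonic-oscillator trajectory in the symplectic $2$-plane through $v^*$ as a closed characteristic of $\partial X$, and then exploit the inequality $V(q)\le M|q|^2$ for the volume bound. The only cosmetic difference is that the paper packages the volume estimate as the inclusion of the symplectic ball $X'=\{|p|^2/2+M|q|^2\le E\}\subseteq X$ (for which Viterbo's inequality is an equality), whereas you compute $\vol X$ explicitly via a Beta integral and then bound $\vol B_V$ below by the inscribed Euclidean ball --- which is precisely the $q$-slice of that same inclusion.
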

\begin{proof}
The assumption we have means $V(q) > 0$ for $q\neq 0$ and $V(tq) = t^2V(q)$ for every $t\in\mathbb R$. Write down the optimal inequality 
\[
V(q) \le \alpha |q|^2,
\]
with equality $V(q_0) = \alpha|q_0|^2$ for some nonzero vector $q_0$ and all its multiples (because of the homogeneity).

Consider 
\[
H(p,q) = \frac{1}{2}|p|^2 + V(q),\quad H'(p,q) = \frac{1}{2}|p|^2 + \alpha |q|^2
\]
and put (we may assume $E=1$ without loss of generality)
\[
X = \{H(p,q) \le 1\},\quad X' = \{H'(p,q) \le 1\}.
\]
It is clear that $X'\subseteq X$. Moreover, on the two-dimensional symplectic subspace $P\subset\mathbb R^{2n}$ spanned by $(0, q_0)$ and $(q_0, 0)$\footnote{Since we already use the Euclidean norm, we have an identification between the canonical coordinates $q$ and $p$.} the Hamiltonians $H$ and $H'$ coincide together with their derivatives (because the ratio $\frac{H}{H'}$ attains its maximum on this subspace). 

Since $C = P\cap \partial X' = P\cap \partial X$ is a closed trajectory for $H'$, then $C$ is also a closed trajectory for $H$. $X'$ is a symplectic ball, which is the equality case of Viterbo's conjecture. Since $X$ shares the same closed characteristic with $X'$, then $c_{EHZ}(X)\le c_{EHZ}(X')$. In fact, the convexity of $X$ and $X'$, and the monotonicity of the capacity implies $c_{EHZ}(X)=c_{EHZ}(X')$. On the other hand, the inclusion $X'\subseteq X$ obviously implies $\vol X \ge \vol X'$. Hence Viterbo's inequality is valid for $X$ as well.
\end{proof}

The previous theorem has a generalization. Before stating it we need to recall the standard and useful in classical mechanics notion of the Legendre transform for a lower semicontinuous (we actually only consider smooth) convex function $f : \mathbb R^n\to \mathbb R$:
\[
f^L(p) = \sup_{q\in\mathbb R^n} \left( \langle p, q\rangle - f(q) \right).
\]
It is known that $(f^L)^L = f$, assuming the function is allowed to take value $+\infty$. In case $f$ is $2$-homogeneous, $f^L$ also turns out to be $2$-homogeneous. Also note that the Legendre transform reverts the inequalities, $f\ge g\Rightarrow f^L\le g^L$.

\begin{theorem}
\label{theorem:ellipsoid-in-between}
Let $T, V : \mathbb R^n\to \mathbb R^+$ be proper, even, $2$-homogeneous and assume $T^L\ge C V$ with equality at some nonzero point $q_0$, and at the line through the origin spanned by $q_0$ from the homogeneity. Assume also that there exists a positive definite quadratic form $Q$ such that
\[
T^L \ge Q \ge C V.
\]
Then Viterbo's conjecture holds for the sets $\{T(p) + V(q) \le E\}$.
\end{theorem}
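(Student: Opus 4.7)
The plan is to adapt the argument of Theorem \ref{theorem:even-2-homogeneous} by using $Q$ to produce an ellipsoidal comparison body. The Legendre transform reverses inequalities, so $T^L \ge Q$ yields $T \le Q^L$; combined with $V \le Q/C$ (from $Q \ge CV$), this gives $H := T + V \le Q^L + Q/C =: \tilde H$ pointwise. Setting $E = 1$ without loss of generality, put $X := \{H \le 1\}$ and $X' := \{\tilde H \le 1\}$; then $X' \subseteq X$ and $\vol X \ge \vol X'$.

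I would next verify that $X'$ achieves equality in Viterbo. Writing $Q(q) = \tfrac12 q^\top A q$ with $A$ positive-definite symmetric, one has $Q^L(p) = \tfrac12 p^\top A^{-1}p$, and Hamilton's equations for $\tilde H$ reduce to $\ddot q = -C^{-1}q$. Every orbit is therefore periodic with the same period $\tau = 2\pi\sqrt{C}$; by Euler's identity for $2$-homogeneous Hamiltonians, every closed characteristic on $\{\tilde H = 1\}$ has action $E\cdot\tau = 2\pi\sqrt{C}$, so $c_{EHZ}(X') = 2\pi\sqrt{C}$. A linear change of variables reduces $X'$ to a Euclidean ball of volume $(2\pi\sqrt{C})^n/n!$, so $\vol X' = c_{EHZ}(X')^n/n!$, the Viterbo equality case.

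The heart of the argument is exhibiting a shared closed characteristic on $\partial X \cap \partial X'$. Put $p_0 := \nabla Q(q_0)$ and $P := \mathrm{span}\{(p_0, 0), (0, q_0)\} \subset \mathbb R^{2n}$; by Euler, $\omega((p_0,0),(0,q_0)) = \langle p_0, q_0\rangle = 2Q(q_0) > 0$, so $P$ is a symplectic $2$-plane. The equality $T^L(q_0) = Q(q_0)$ together with $T^L \ge Q$ globally forces $\nabla T^L(q_0) = \nabla Q(q_0) = p_0$, and a routine Legendre-duality computation then gives $T(p_0) = Q^L(p_0)$; meanwhile $CV(q_0) = Q(q_0)$ gives $V(q_0) = Q(q_0)/C$. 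Combined with $2$-homogeneity and evenness, this makes $H$ and $\tilde H$ agree on all of $P$ as the function $(s,t)\mapsto s^2 T(p_0) + t^2 V(q_0)$. Because $\tilde H - H \ge 0$ globally and vanishes identically on $P$, the (sub)gradients of $H$ and $\tilde H$ coincide at every point of $P$, so the Hamiltonian vector fields agree on $P$ and the ellipse $\partial X \cap P = \partial X' \cap P$ is a closed characteristic for both. This yields $c_{EHZ}(X) \le c_{EHZ}(X') = 2\pi\sqrt{C}$, and together with $\vol X \ge \vol X' = c_{EHZ}(X')^n/n!$ proves Viterbo for $X$.

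The main obstacle I expect is the gradient-matching step when $H$ is not smooth at points of $P \cap \partial X$; as in the previous theorem, one can work with subdifferentials (convexity gives $\partial H(z) \subseteq \partial \tilde H(z)$ for $z \in P$, picking out a common subgradient that generates the flow), or pass to smooth approximations and invoke the continuity of $c_{EHZ}$ and $\vol$ in the Hausdorff metric. A secondary point is that every orbit of $\tilde H$ really has the same period, but this is the whole reason for choosing $\tilde H$ of the special form $Q^L(p) + C^{-1}Q(q)$: it is precisely the quadratic symplectic form of isotropic type, so without the quadratic sandwich $Q$ the comparison body need not achieve Viterbo equality.
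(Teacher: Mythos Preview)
Your argument is correct and follows essentially the same route as the paper's proof. The only cosmetic difference is that the paper first normalizes by a linear symplectic change of coordinates so that $Q=|q|^2/2$ and $C=1$, reducing $X'$ to the standard ball; you instead keep $Q$ and $C$ general and verify directly that $\tilde H = Q^L + Q/C$ has all orbits of equal period (the ``isotropic'' remark at the end), which is exactly the content of the normalization. Your explicit Legendre-duality computation showing $T(p_0)=Q^L(p_0)$ from $T^L(q_0)=Q(q_0)$ and $\nabla T^L(q_0)=\nabla Q(q_0)$ fills in a step the paper leaves terse.
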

\begin{proof}
After a coordinate change we assume $Q=|q|^2/2$ and $C=1$ (the latter may scale the capacity and the volume accordingly). In this case $Q^L = |p|^2/2$, $T\le Q^L$ and $V\le Q$ with equality at $q_0$ and $p_0$ equal to $q_0$ in this coordinate system. Evidently, the circle 
\[
S = \{uq_0 + vp_0 : u^2 + v^2 = 1\}
\]
is a closed characteristic of the boundary of the ball
\[
X'=\left\{Q(q) + Q^L(p) = \frac{|q|^2 + |p|^2}{2} \le \frac{1}{2}\right\}.
\] 
Since on the linear span $\{uq_0+vp_0\}$ the derivatives of $T+V$ are the same as the derivatives of $Q^L+Q$ (because there are the critical points of the inequalities $T\le Q^L$ and $V\le Q$) then $S$ is also a closed characteristic of the boundary of $X=\{T(p)+V(q)\le 1/2\}$. From the inclusion $X'\subseteq X$ and the monotonicity $c_{EHZ}(X')\le c_{EHZ}(X)$ it follows that $c_{EHZ}(X_0) = c_{EHZ}(X) = \pi$ and also
\[
\vol(X) \ge \vol(X') = \frac{\pi^n}{n!}.
\]
\end{proof}

\begin{remark}
Note that in the previous results we establish the Viterbo inequality for $X$ by putting a symplectic ball into $X$ so that $\partial X$ and the boundary of the ball share a common closed characteristic. This proves that their Ekeland--Hofer--Zehnder capacities are equal, while the inequality for volumes follows from the inclusion. It is still an open problem (the strong version of Viterbo's conjecture) to prove or disprove that it is possible to put (an open) symplectic ball of capacity $C$ into any convex body $X\subset\mathbb R^{2n}$ of Ekeland--Hofer--Zehnder capacity $C$.
\end{remark}

\section{Viterbo's conjecture for a sum of Hamiltonians of pairs of canonical variables}
\label{section:in-pairs}

Here we give an example of another kind, when we consider the sum of one-dimensional motions.

\begin{theorem}
\label{theorem:sum-two-dim}
Viterbo's conjecture holds for the sets $\{H\le E\}$ when we deal with a direct sum of one-dimensional motions
\[
H(p, q) = \sum_{i=1}^n H_i(p_i, q_i),
\]
each $H_i : \mathbb R^2\to\mathbb R^+$ being proper with unique minimum and no other critical points in the plane.
\end{theorem}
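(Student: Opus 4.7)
The plan is to mirror the strategy of Theorems~\ref{theorem:even-2-homogeneous} and~\ref{theorem:ellipsoid-in-between}: exhibit an explicit closed characteristic on $\partial X$ to upper bound $c_{EHZ}(X)$, then lower bound $\vol(X)$ by inscribing a simplex, and combine. First I would translate each $H_i$ so that its unique minimum sits at the origin of $(p_i,q_i)$ with $H_i(0,0)=0$. Writing $A_i(s):=\area\{H_i\le s\}$, Moser's area-preserving lemma applied separately in each plane produces a global symplectic diffeomorphism bringing $H_i$ into radial form $H_i(p_i,q_i)=h_i(\rho_i)$, where $\rho_i:=p_i^2+q_i^2$ and $\pi h_i^{-1}=A_i$. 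Since $\vol(X)$ and $c_{EHZ}(X)$ are symplectic invariants, it suffices to prove the inequality in this radial model.

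In the radial model, on $\partial X=\{\sum_i h_i(\rho_i)=E\}$ the Hamiltonian flow is a uniform rotation in each plane, and the circle $\{\rho_i=h_i^{-1}(E),\ \rho_j=0\ (j\ne i)\}$ is a closed characteristic of action equal to the enclosed disk area $A_i(E)$; this gives $c_{EHZ}(X)\le\min_i A_i(E)$. Integrating out the angular directions, $\vol(X)=\pi^n\vol(U)$ with $U=\{\rho\in\mathbb R_{\ge 0}^n:\sum_i h_i(\rho_i)\le E\}$. I would then inscribe the coordinate simplex $S=\{\rho\ge 0:\sum_i \rho_i/h_i^{-1}(E)\le 1\}$ of volume $\prod_i h_i^{-1}(E)/n!$: for $\rho_i=t_ih_i^{-1}(E)$ with $\sum t_i\le 1$, invoking convexity of each scalar $h_i$ gives $h_i(\rho_i)\le t_iE$, hence $\sum_i h_i(\rho_i)\le E$, so $S\subseteq U$. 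Putting the pieces together,
\[
\vol(X)\ \ge\ \pi^n\cdot\frac{\prod_i h_i^{-1}(E)}{n!}\ =\ \frac{\prod_i A_i(E)}{n!}\ \ge\ \frac{(\min_i A_i(E))^n}{n!}\ \ge\ \frac{c_{EHZ}(X)^n}{n!}.
\]

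The delicate point — and where I expect the main technical obstacle to lie — is the convexity of the scalar functions $h_i$ used in the simplex inclusion. This is equivalent to concavity of $s\mapsto A_i(s)$, which does not follow from the bare hypothesis that $H_i$ has a unique critical point (for instance $H_i(p,q)=\sqrt{p^2+q^2}$ gives $A_i(s)=\pi s^2$, convex rather than concave, so that the naive simplex $S$ leaves $U$). Fortunately, in exactly that regime the pure-$i$ bound $c_{EHZ}\le\min_iA_i(E)$ is also far from sharp, because shorter multi-plane closed orbits become available on $\partial X$; so the proof should split into cases: when each $A_i$ is concave, the simplex argument above closes the proof, while when some $A_i$ is convex one has to sharpen the capacity estimate using these shorter multi-plane orbits and simultaneously replace the coordinate simplex in $U$ by a convex region better adapted to the geometry of the $h_i$'s.
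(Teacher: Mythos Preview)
Your radial-model reduction is correct, and you have put your finger on the real gap: the inclusion $S\subseteq U$ needs each scalar $h_i$ to be convex (equivalently each $A_i$ concave), which is not part of the hypothesis. The case-split you propose at the end is only a sketch---``some $A_i$ convex'' versus ``some $A_i$ concave'' is neither exhaustive nor stable, and the promised sharper multi-plane capacity bound is never written down---so as it stands the argument does not close.

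The paper's proof avoids the difficulty by choosing the capacity witness and the inscribed simplex \emph{together}, so that the inclusion holds for free and no convexity of $h_i$ is ever invoked. In your coordinates, minimize the linear function $L(\rho)=\sum_i\rho_i$ over the constraint surface $\partial U=\{\rho\ge 0:\sum_i h_i(\rho_i)=E\}$, and call the minimum $L_*$. At a minimizer, the Lagrange condition forces $h_i'(\rho_i)$ to be equal for all active indices, which is exactly the equal-period condition making the product of the corresponding circles a genuine closed characteristic on $\partial X$; hence $c_{EHZ}(X)\le\pi L_*$. Now the simplex $\{\rho\ge0:\sum_i\rho_i\le L_*\}$ lies inside $U$ automatically: if some $\rho$ with $\sum_i\rho_i<L_*$ satisfied $\sum_i h_i(\rho_i)>E$, then the segment from $0$ to $\rho$ would meet $\partial U$ at a point with $\sum_i\rho_i<L_*$, contradicting the definition of $L_*$. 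This yields $\vol(X)=\pi^n\vol(U)\ge \pi^n L_*^n/n!\ge c_{EHZ}(X)^n/n!$ with no shape assumption on the $h_i$. The conceptual difference from your attempt is that you fixed the capacity bound (pure single-plane orbits) and the simplex (corners at $h_i^{-1}(E)e_i$) independently and then had to force them to be compatible; the paper ties both to the same variational minimum, so compatibility is built in.
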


\begin{proof}
This is essentially the result of~\cite[Section 2]{herm1998}, but we provide the explanation here for completeness.

In this case the trajectories are characterized as curves whose every projection to the space of $(p_i,q_i)$ is a trajectory of $H_i$. For a closed trajectory, let the corresponding area of the projection be $A_i = \int_\gamma p_idq_i$. Since the given Hamiltonians $H_i$ are proper with unique minimum at the origin, for every $H_i$ we find a unique closed trajectory with given $A_i$, and denote the value of $H_i$ on such a trajectory by $E_i(A_i)$. Such closed trajectories in coordinates $(p_i,q_i)$ will produce a closed trajectory in all the coordinates if the periods coincide, which translates to the fact that the numbers $\frac{\partial E_i}{\partial A_i}$ are equal to the same number for all $i$ such that $A_i > 0$. Note that this condition is sufficient, but not necessary, in the general case the periods of the projections are rational multiples of each other.

Note that every $E_i$ is a strictly increasing function of $A_i$. Consider the $A_i$ as the coordinates and study the function $A = A_1+\dots+A_n$ on the set
\[
Y_E = \{(A_1,\ldots, A_n) \where E_1(A_1) + \dots + E_n(A_n) = E\}.
\]
This set is diffeomorphic to a simplex of dimension $n-1$, and the critical points of $A$ may belong to its faces, which without loss of generality are given by 
\[
A_{k+1} = \dots = A_n = 0.
\]
For the remaining $A_1,\ldots,A_k$ we have from the Lagrange multiplier method:
\[
\frac{\partial E_1}{\partial A_1} = \dots = \frac{\partial E_k}{\partial A_k} = L.
\]
This precisely means that any critical value $A$ on $Y_E$ corresponds to a closed trajectory with action $A$. Let $A_E$ be the minimal of the actions. It is clear that the set
\[
X_E = \{(A_1,\ldots, A_n) \where E_1(A_1) + \dots + E_n(A_n) \le E\}
\]
contains the simplex $A_1+\dots+A_n < A_E$, the latter evidently having the symplectic volume $\frac{A_E^n}{n!}$. Hence the volume of $X_E$ is at least $\frac{A_E^n}{n!}$.

Consider $A_i : \mathbb R^2\to\mathbb R^+$ as a function, mapping a point $(p,q)$ to the action of the $H_i$-Hamiltonian closed trajectory passing through $(p,q)$. They together constitute a map $A : \mathbb R^{2n}\to (\mathbb R^+)^n$, evidently preserving the volume form, hence for the preimage $X'_E = A^{-1}(X_E)$ we have 
\[
\vol X'_E = \vol X_E = \frac{A_E^n}{n!}.
\]
Also $c_{EHZ}(X'_E)\le A_E$, because the critical point of $A$ we have found represents a closed characteristic on the boundary of $X'_E$, which is a sublevel set of the original Hamiltonian.
\end{proof}

\begin{remark}
Theorems \ref{theorem:even-2-homogeneous} and \ref{theorem:sum-two-dim} need no convexity assumption if we define $c_{EHZ}$ of the set $\{H \le E\}$ as the smallest action of a closed characteristic on its boundary. Note that thus defined value should not be called a \emph{symplectic capacity} since the capacities are assumed to be inclusion preserving.
\end{remark}

\section{Further questions and examples}

\subsection{General questions}

Going back to the case when $p_i$ and $q_i$ are separated, we note that the other small steps towards the general Viterbo conjecture would be:

\begin{question}
\label{question:2-hom-sum}
What if $H(p,q) = T(p) + V(q)$ with arbitrary proper, convex, even $2$-homogeneous $T$ and $V$?
\end{question}

\begin{question}
What if $H(p,q) = |p|^2/2 + V(q)$ with arbitrary proper convex $V$, not even, or not $2$-homogeneous?
\end{question}

Let us discuss Question~\ref{question:2-hom-sum} and its particular cases a bit more. Similar to the result about Lagrangian products in~\cite{aaok2013}, we expect the following case to be interesting:
\[
H(p,q) = \|p\|_*^2 +\|q\|^2,
\]
for a pair of arbitrary (symmetric) norm and its dual. In this case the convex body 
\[
X = \{(p,q) \where \|p\|_*^2 +\|q\|^2 \le 1\}
\]
is called the \emph{$L_2$-sum} $K^\circ\oplus_2 K$, where $K$ is the unit ball of $\|\cdot \|$ and $K^\circ$ is its polar, the unit ball of $\|\cdot\|_*$. 

In order to find closed trajectories of Hamiltonians of this kind, we can choose any direction with $q_0\in\partial K$ and its corresponding momentum $p_0 = \frac{\partial \|q\|}{\partial q} |_{q=q_0}\in \partial K^\circ$. Obviously, a closed Hamiltonian trajectory on the boundary of $X$, lying in the 2-plane spanned by $q_0$ and $p_0$, is possible for the $2$-homogeneity. This trajectory behaves like a harmonic oscillator and its action is evidently  equal to $\pi$. Could it happen that such trajectories have the smallest action of all trajectories on the boundary of $X$? Assuming Viterbo's conjecture we would then obtain
\[
\vol X \ge \frac{\pi^n}{n!}.
\]
Note that for any norm $\|\cdot\|$ on $\mathbb R^m$
\[
\int_{\mathbb R^m} e^{-\|x\|^\alpha}\; dx = \vol \{x \where \|x\| \le 1\}\cdot \Gamma(m/\alpha+1),
\]
and, therefore, for the $L_2$-sum in $\mathbb R^n\times \mathbb R^n$, we take $\alpha=2$ and split the integral of $e^{-\|p\|_*^2 - \|q\|^2}$ to two integrals to obtain (the binomial coefficient is defined for non-integer values using the $\Gamma$-function)
\begin{equation}
\label{equation:ell-two-sum}
\binom{n}{n/2} \vol K^\circ\oplus_2 K = \vol K^\circ \cdot \vol K.
\end{equation}
Hence our assumptions read:
\[
\vol K^\circ \cdot \vol K \ge \binom{n}{n/2} \frac{\pi^n}{n!},
\]
or, after a simplification,
\[
\vol K^\circ \cdot \vol K \ge \frac{\pi^n}{((n/2)!)^2}.
\]
But the latter is the Blaschke--Santal\'o inequality~\cite{bla1917,san1949} in the opposite direction, which is generally false. 

Hence, the Hamiltonians of the form $\|p\|_*^2 +\|q\|^2$ with non-Euclidean norms seem to have closed trajectories ``faster'' than the back-and-forth oscillations along a straight line through the origin. At this point, we get convinced that it makes sense to investigate the basic case (the $\ell_1$ and $\ell_\infty$ norms)
\[ 
H(p, q) = \|p\|_1^2 + \|q\|_\infty^2,
\]
corresponding to the known equality case in Mahler's conjecture, and examine its closed trajectories more closely. We do this below. 

\subsection{A two-dimensional example}

We now consider the simplest case when $n=2$ and the symplectic space is $\mathbb{R}^4$. In addition to one-dimensional back-and-forth trajectories mentioned above, it is easy to see from the canonical Hamiltonian equations 
\begin{eqnarray}
\dot{q_i} &=& \frac{\partial H}{\partial p_i} = 2(|p_1| + |p_2|)\frac{\partial |p_i|}{\partial p_i}\\\dot{p_i} &=& -\frac{\partial H}{\partial q_i} = -2\max\limits_{j}|q_j|\frac{\partial \max_{j}|q_j|}{\partial q_i}
\end{eqnarray} 
that any other trajectory looks like the one in Figure~\ref{figure:1} (its projection onto $p$- and $q$-subspace are shown) or this one with the opposite direction obtained by the substitution $t \rightarrow -t$.

\begin{figure}[ht]\center
\begin{tabular}{cc}
\includegraphics[width=40mm]{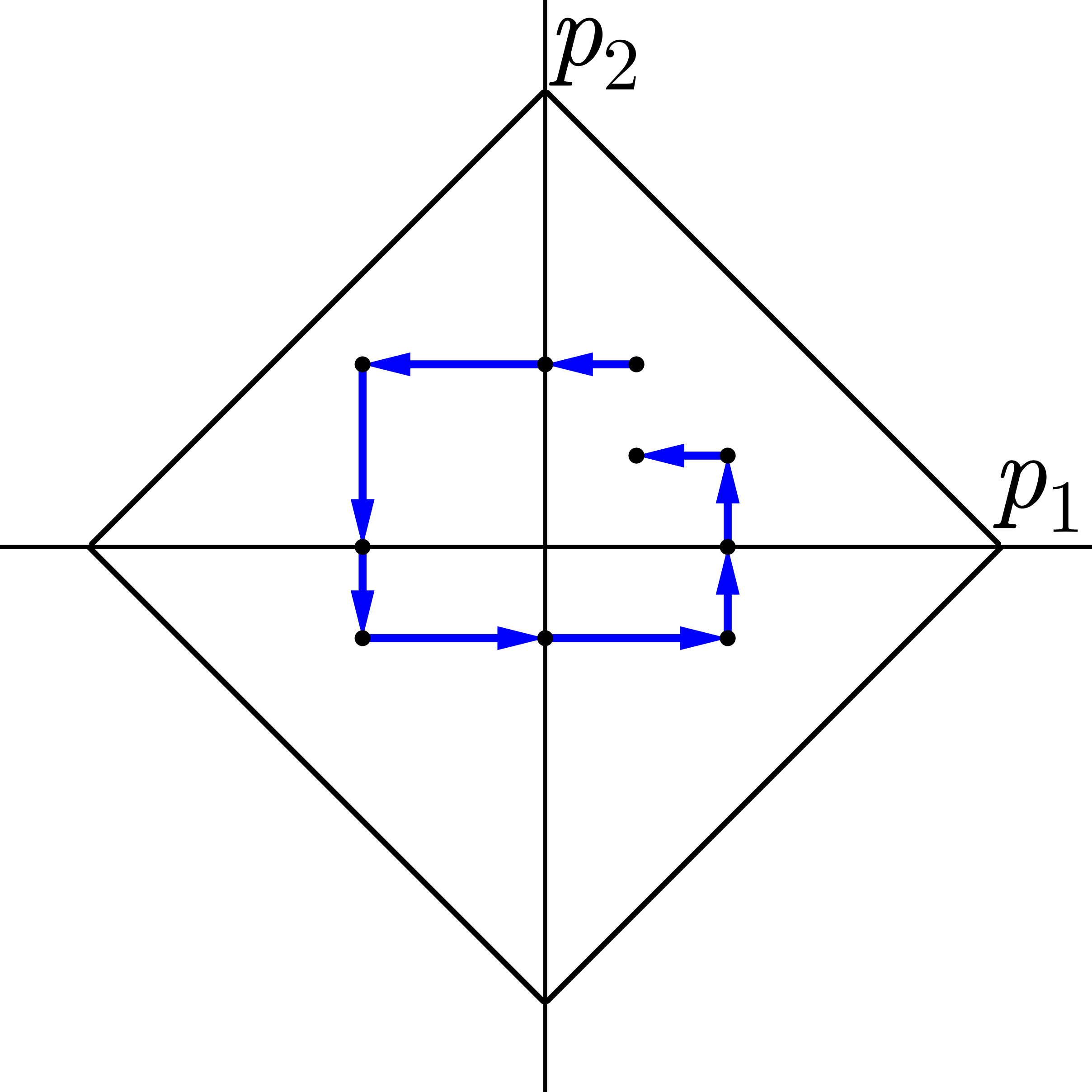}
&
\includegraphics[width=40mm]{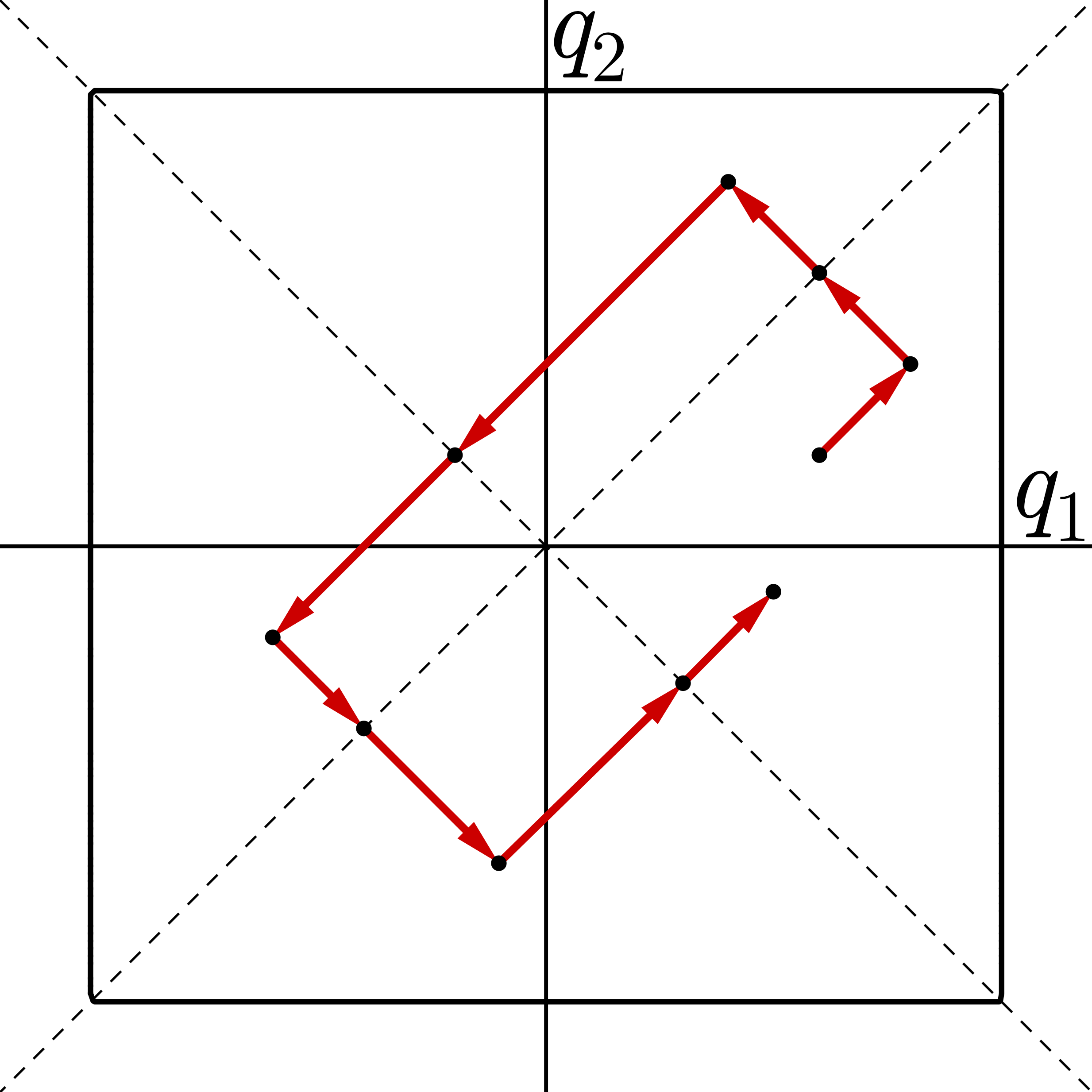}
\end{tabular}
\caption{A trajectory}
\label{figure:1}
\end{figure}

Take a closer look at a trajectory's cycle. In the planar case we can rotate one of the planes to make both norms look similar (although making a non-symplectic transform), that is we introduce variables 
\[
x_1 = \frac{q_1 - q_2}{2},\quad x_2 = \frac{q_1 + q_2}{2},
\] 
for which $\|q\|_{\infty} = \|x\|_1$, $H = \|p\|_1^2 + \|x\|_1^2$, and write the new equations of motion 
\[
\dot{p}_{1,2} = -\|x\|_1(\text{sgn}x_2 \pm \text{sgn}x_1),\quad \dot{x}_{1,2} = \|p\|_1(\text{sgn}p_1 \mp \text{sgn}p_2),
\] 
whose trajectory is illustrated in Figure~\ref{figure:2}. Consider the cycle of trajectory's points corresponding to non-smooth switches in the Hamiltonian: 
\begin{multline*}
(0, p_2, x_1, x_2) \rightarrow (p_1, p_2, 0, x_2) \rightarrow (p_1, 0, x_1', x_2) \rightarrow (p_1, p_2', x_1', 0) \rightarrow (0, p_2', x_1', x_2') \rightarrow\\
 \rightarrow (p_1', p_2', 0, x_2') \rightarrow (p_1', 0, x_1'', x_2') \rightarrow (p_1', p_2'', x_1'', 0) \rightarrow (0, p_2'', x_1'', x_2''). 
\end{multline*}
Here we assume positive variables for the first point: $p_2, x_1, x_2 > 0$. Such a switch occurs when one of the variables becomes zero, and we call it a \emph{turning point}. Let us also call the path between neighboring turning points a \emph{line segment}. Note that any line segment has two constant variables and we can identify the remaining variable in turning points from the expression:
\begin{equation}
    \label{surf}
    \|p\|_1^2 + \|x\|_1^2 = (|p_1| + |p_2|)^2 + (|x_1| + |x_2|)^2 = 1.
\end{equation} 

Define the sequence of absolute values of new variables appearing at the end of each line segment during the motion as $\{a_i\}$, and sequences of acute angles 
\[
\alpha_i = \arccos{a_{2i - 1}},\quad \beta_i = \arcsin{a_{2i}} \in [0, \frac{\pi}{2}].
\] 
For example, for already considered cycle 
\[
a_1 = |p_2| = \cos{\alpha_1},\ a_2 = |x_2| = \sin{\beta_1},\ a_3 = |p_1| = \cos{\alpha_2},\ a_4 = |x_1'| = \sin{\beta_2},\ a_5 = |p_2'|,
\] 
and so on. Note that angles $\alpha_i$ are used for $p$ and $\beta_i$ are used for $x$ (former $q$) coordinates. This sequence satisfies a recurrence 
\begin{equation}
    \label{rec}
    a_{n+1} = \sqrt{1-a_n^2} - a_{n-1}.
\end{equation}      

%Рисунок 2

\begin{figure}[ht]\center
\begin{tabular}{cc}
\includegraphics[width=40mm]{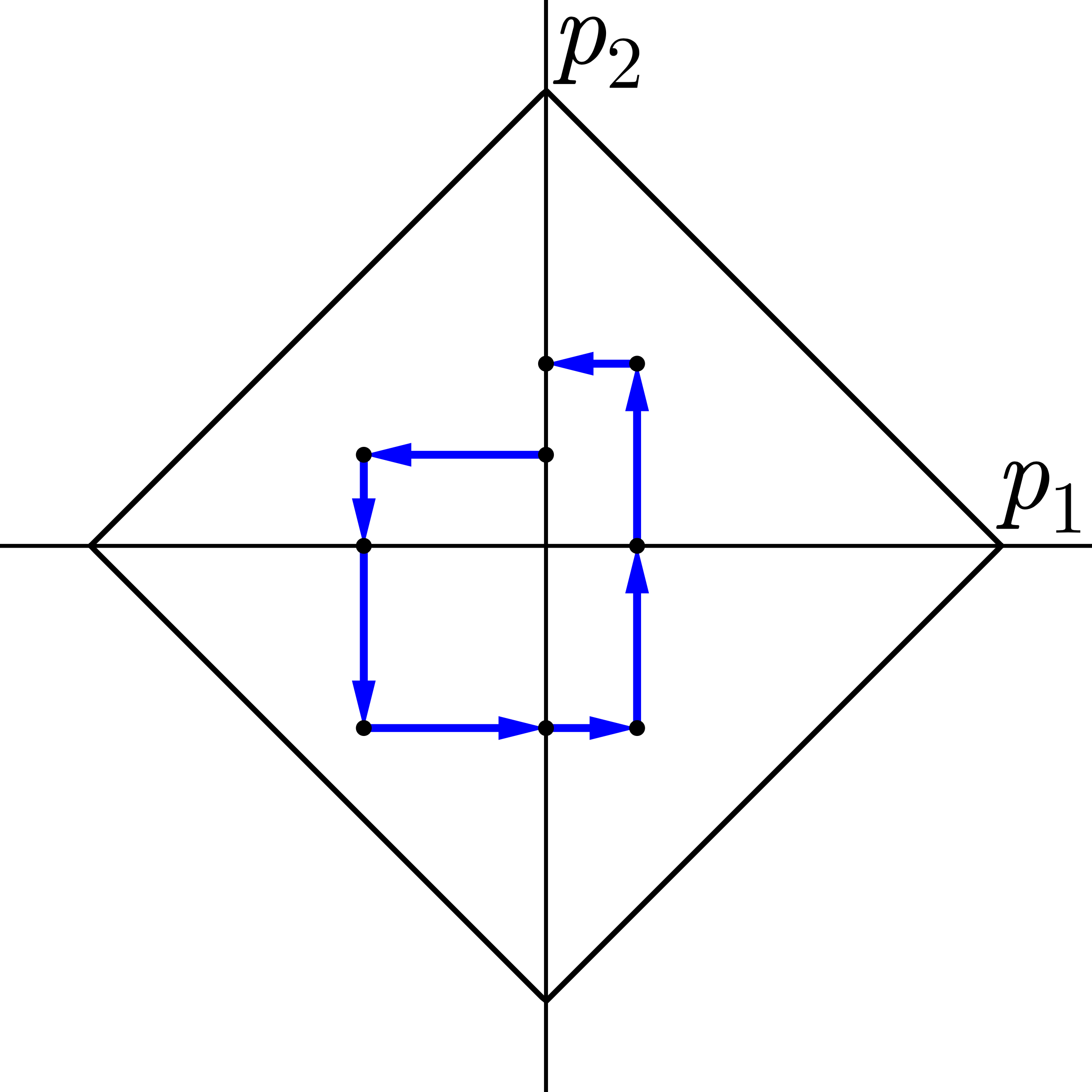}
&
\includegraphics[width=40mm]{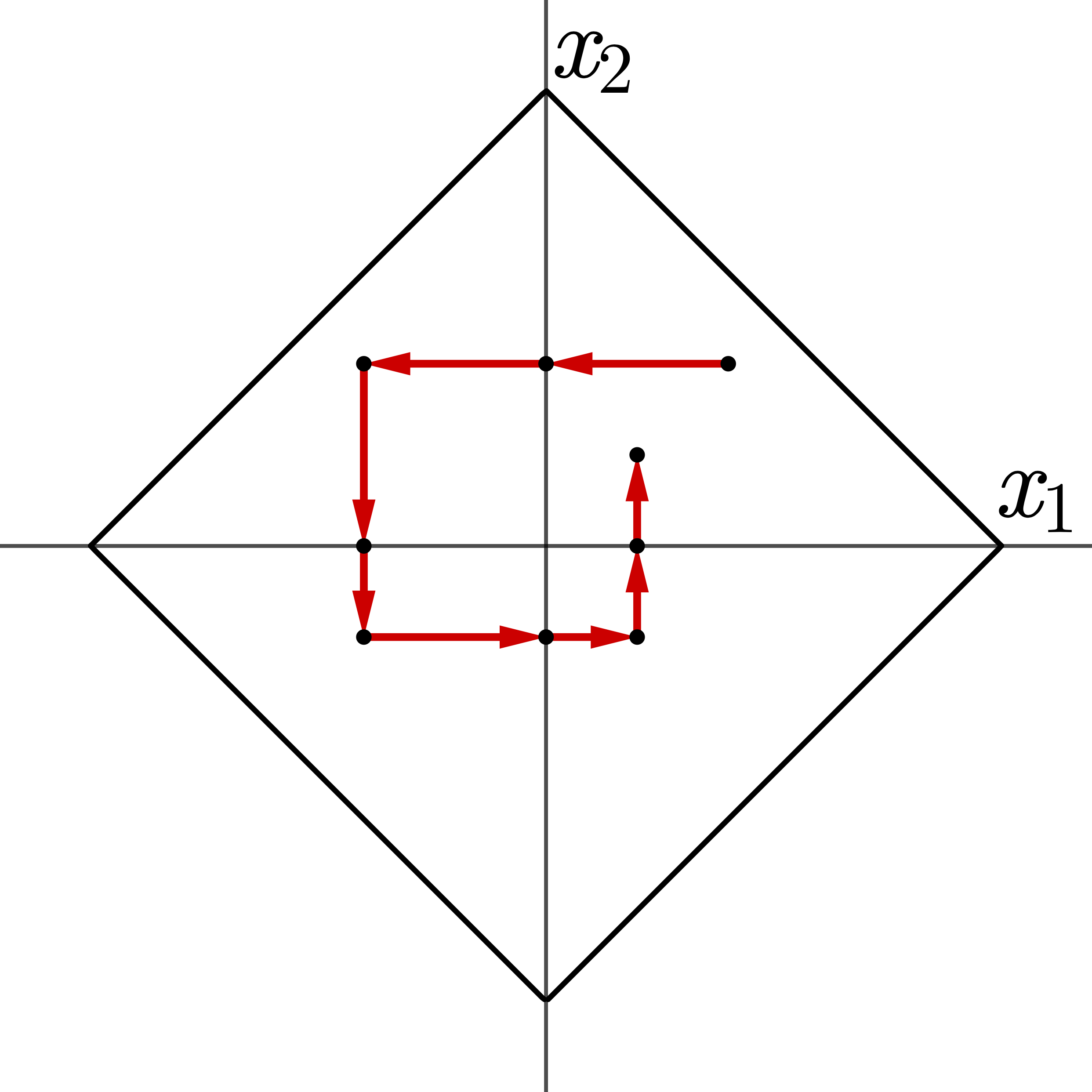}
\end{tabular}
\caption{One cycle of a trajectory}
\label{figure:2}
\end{figure}

Such an approach gives us a very simple expression for the action of a closed trajectory which consists of several cycles. It is known that for $2$-homogeneous Hamiltonians the action of a closed trajectory is equal to its period in terms of time for the level set $\{H=1\}$. Since our Hamiltonian is $2$-homogeneous, we just find the time. As already mentioned, any line segment has two constant variables $a_i$, $a_{i+1}$ expressed by angles $\alpha$ and $\beta$, and therefore the time required to pass this segment is also determined by them. 

By solving differential equations in one particular case, we find the time of motion from point $A = (0, p_2, x_1, x_2)$ to point $B = (p_1, p_2, 0, x_2)$ with $p_1 < 0, p_2, x_1, x_2 > 0$:
\begin{eqnarray*}
 \dot{p}_1(t) &=& -2x_1(t) - 2x_2(t),\\
 \dot{p}_2(t) &=& 0,\\
 \dot{x}_1(t) &=& 2p_1(t) - 2p_2(t),\\
 \dot{x}_2(t) &=& 0
\end{eqnarray*}
with the initial condition 
\begin{eqnarray*}
 p_1(0) &=& 0,\\
 x_1(0) &=& x_1.
\end{eqnarray*}
The solution appears to be
\begin{eqnarray*}
 p_1(t) &=& -(x_1 +x_2)\sin{2t} - p_2\cos{2t} + p_2,\\
 x_1(t) &=& (x_1 +x_2)\cos{2t} - p_2\sin{2t} -x_2,
\end{eqnarray*}
and we find $t_1$ from 
\begin{eqnarray*}
 p_1(t_1) &=& p_1,
 \\x_1(t_1) &=& 0,
\end{eqnarray*}
as
\[
t_1 = \frac{1}{2}\arcsin\left(\sqrt{1-x_2^2}\sqrt{1-p_2^2} - p_2x_2\right) = \frac{1}{2}(\alpha_1 - \beta_1).
\]

In the general case, for a line segment with constant $a_i$, $a_{i+1}$ expressed by angles $\alpha$ and $\beta$ ($\alpha \ge \beta$) the time required to pass $t = \frac{1}{2}(\alpha - \beta)$. To calculate the time of the whole trajectory we have to sum the times of all line segments. It is difficult to obtain an exact closed expression for the action, but it is possible to estimate it.

\begin{lemma}
\label{lemma:cycle}
Let $\gamma$ be a closed trajectory of $H = \|p\|_1^2 + \|q\|_{\infty}^2 = 1$ with $k$ cycles. Then the action of $\gamma$ is at least $k$.
\end{lemma}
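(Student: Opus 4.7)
The plan is to compute the action of $\gamma$ (which equals the period, by $2$-homogeneity of $H$) as an explicit sum in the angles $\alpha_i,\beta_i$ attached to the turning points via \eqref{rec}, and then bound this sum from below by $k$. First I would read off, from the sequence of turning points making up one cycle, the pairing of the two constant variables on each of the eight line segments:
\[
(\alpha_1,\beta_1),(\alpha_2,\beta_1),(\alpha_2,\beta_2),(\alpha_3,\beta_2),(\alpha_3,\beta_3),(\alpha_4,\beta_3),(\alpha_4,\beta_4),(\alpha_5,\beta_4),
\]
each contributing time $\tfrac12(\alpha-\beta)$. Summing over the $k$ cycles and using the closure $\alpha_{4k+1}=\alpha_1$, every $\alpha_i$ and every $\beta_i$ ($1\le i\le 4k$) receives total coefficient $2$, so the action simplifies to
\[
T=\sum_{i=1}^{4k}(\alpha_i-\beta_i).
\]

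For a pointwise lower bound on $\alpha_i-\beta_i$ I would use \eqref{rec} at even and odd indices. Setting $a_{2i-1}=\cos\alpha_i$ and $a_{2i}=\sin\beta_i$, the recurrence becomes
\[
\cos\alpha_{i+1}+\cos\alpha_i=\cos\beta_i,\qquad \sin\beta_{i+1}+\sin\beta_i=\sin\alpha_{i+1}.
\]
The second identity after the shift $i\mapsto i-1$ reads $\sin\alpha_i-\sin\beta_i=\sin\beta_{i-1}$, and since $\sin$ is $1$-Lipschitz,
\[
\alpha_i-\beta_i\ge\sin\alpha_i-\sin\beta_i=\sin\beta_{i-1}.
\]
Summing cyclically gives $T\ge\sum_{i=1}^{4k}\sin\beta_i=:B_s$.

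It remains to show $B_s\ge k$. Summing each of the two recurrences cyclically over $i=1,\dots,4k$ yields
\[
B_c=2A_c,\qquad A_s=2B_s,
\]
where $A_c,A_s,B_c,B_s$ denote the cyclic sums of $\cos\alpha_i,\sin\alpha_i,\cos\beta_i,\sin\beta_i$. Since $\cos\beta_i\le 1$, $B_c\le 4k$, and so $A_c\le 2k$. The elementary inequality $\cos\theta+\sin\theta=\sqrt2\,\sin(\theta+\pi/4)\ge 1$ on $[0,\pi/2]$ summed over $i$ gives $A_c+A_s\ge 4k$, i.e.\ $A_c+2B_s\ge 4k$, whence $B_s\ge(4k-A_c)/2\ge k$. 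Combining, $T\ge B_s\ge k$. The main obstacle I expect is not the analytic estimates but the bookkeeping of Step~1: getting the $(\alpha,\beta)$ pairing right on each segment, confirming $\alpha\ge\beta$ there (this follows from $\cos\alpha_i\le\cos\beta_i$ and $\sin\alpha_{i+1}\ge\sin\beta_i$, both immediate from the recurrences), and checking that every $\alpha_i,\beta_i$ ends up with the same coefficient after closure.
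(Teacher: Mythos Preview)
Your proof is correct and follows essentially the same approach as the paper: both set up the angles $\alpha_i,\beta_i$, use the Lipschitz bound $\alpha-\beta\ge\sin\alpha-\sin\beta$ (the paper alternates this with the analogous cosine bound), and exploit the cyclic identities $B_c=2A_c$, $A_s=2B_s$. The only difference is cosmetic: you first simplify the action to $T=\sum_i(\alpha_i-\beta_i)$ and then reduce to $B_s\ge k$ via $A_c+A_s\ge 4k$ together with the extra observation $A_c\le 2k$, whereas the paper keeps the segment-by-segment sum $2S$ and bounds it directly via the pointwise inequality $\cos\alpha+\tfrac12\sin\alpha\ge\tfrac12$ on $[0,\pi/2]$.
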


\begin{proof}
For $\gamma$ we have a defining sequence 
\[
{\cos\alpha_1,\sin\beta_1, \cos\alpha_2, \ldots , \sin\beta_n},
\]
where $2n = 8k$ as one cycle generates eight angles, and the constraints from \eqref{rec} 
\[
\cos\alpha_i + \cos\alpha_{i + 1} = \cos\beta_i,\quad \sin\beta_{i - 1} + \sin\beta_i = \sin\alpha_i.
\]
Summing all the equations we obtain
\[
2\sum_{i=1}^n\cos{\alpha_i} = \sum_{i=1}^n\cos{\beta_i}\quad \text{and}\quad 2\sum_{i=1}^n\sin{\beta_i} = \sum_{i=1}^n\sin{\alpha_i}.
\]
Then for the action we have
\[
S = \frac{1}{2}(\alpha_1 -\beta_1 + \alpha_2 - \beta_1 + \alpha_2 - \beta_2 + \dots + \alpha_1 - \beta_n),
\]
and therefore
\begin{multline*}
2S \ge \cos\beta_1 - \cos\alpha_1 + \sin\alpha_2 - \sin\beta_1 + \cos\beta_2 - \cos\alpha_2 + \dots + \sin\alpha_1 - \sin\beta_n =\\
= \sum_{i=1}^n\cos\beta_i - \sum_{i=1}^n\cos\alpha_i + \sum_{i=1}^n\sin\alpha_i - \sum_{i=1}^n\sin\beta_i = \sum_{i=1}^n\cos\alpha_i + \frac{1}{2}\sum_{i=1}^n\sin\alpha_i \ge \frac{n}{2}.
\end{multline*}
Since the function $\cos{\alpha} + \frac{1}{2}\sin{\alpha}$ has the minimal value $\frac{1}{2}$ on $[0,\frac{\pi}{2}]$, we obtain $S \ge \frac{n}{4} = k$.
\end{proof}

\begin{theorem}
\label{theorem:two-dim-squares}
The smallest action of a closed trajectory of $H = \|p\|_1^2 + \|q\|_{\infty}^2 = 1$ in $\mathbb R^4$ is $4\arcsin{\frac{3}{5}}$ and Viterbo's conjecture holds true for its sublevel sets, the $\ell_2$-sums of the square and its polar, also a square.
\end{theorem}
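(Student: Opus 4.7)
My plan is to prove the theorem in three stages: exhibit an explicit closed characteristic of action $4\arcsin(3/5)$, show that no closed characteristic on $\partial X$ has smaller action, and deduce Viterbo's volume inequality. For the first stage I would take a fully symmetric single-cycle trajectory in the sense of Figure~\ref{figure:2}, meaning $\alpha_i \equiv \alpha$ and $\beta_i \equiv \beta$. The recurrence~\eqref{rec} collapses to $2\cos\alpha = \cos\beta$ and $2\sin\beta = \sin\alpha$; together with $\sin^2 + \cos^2 = 1$ these force $\cos\alpha = \sin\beta = 1/\sqrt{5}$ and $\sin\alpha = \cos\beta = 2/\sqrt{5}$. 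Hence $\alpha - \beta = \arctan 2 - \arctan(1/2) = \arctan(3/4) = \arcsin(3/5)$, and since a cycle consists of $8$ line segments each of duration $(\alpha - \beta)/2$, the period---and by $2$-homogeneity also the action---equals $4\arcsin(3/5)$, so $c_{EHZ}(X) \le 4\arcsin(3/5)$.

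For the matching lower bound I would split the closed characteristics into three classes. The forth-and-back oscillations in symplectic $2$-planes through the origin are harmonic oscillators with action $\pi > 4\arcsin(3/5)$. Trajectories of Figure~\ref{figure:1} type with $k \ge 3$ cycles are handled directly by Lemma~\ref{lemma:cycle}, which gives $S \ge k \ge 3 > 4\arcsin(3/5)$. For $k \in \{1,2\}$ I would set up the Lagrangian for minimizing $S = \sum_{i=1}^{4k}\alpha_i - \sum_{i=1}^{4k}\beta_i$ (the expression for the period summed over all $8k$ line segments) subject to the cyclic recurrence~\eqref{rec}; the cyclic symmetry of the constraints should pin the minimizer to the fully symmetric configuration from the first step, giving $S = 4k\arcsin(3/5) \ge 4\arcsin(3/5)$.

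For the third stage, formula~\eqref{equation:ell-two-sum} with $n = 2$ and $K = [-1,1]^2$ (so $\vol K = 4$ and $\vol K^\circ = 2$) yields $\vol X = (\vol K)(\vol K^\circ)/\binom{2}{1} = 4$, and Viterbo's inequality $\vol X \ge c_{EHZ}(X)^2/2$ becomes $4 \ge 8\arcsin^2(3/5)$, which holds since $\arcsin(3/5) \approx 0.644 < 1/\sqrt{2}$.

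The main obstacle is the $k \in \{1,2\}$ case of the lower bound: one must show that the symmetric configuration is not merely a critical point of $S$ on the constraint variety~\eqref{rec} but its global minimum. The nonlinear coupling of $\cos\alpha_i$ with $\cos\beta_i$ (and of $\sin\beta_i$ with $\sin\alpha_i$) blocks a direct convexity argument, so one likely needs a careful second-variation analysis at the symmetric critical point, backed by a classification of all critical configurations using the cyclic group action---or alternatively a symmetrization device that averages an arbitrary admissible configuration toward the symmetric one without increasing $S$.
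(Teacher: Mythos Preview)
Your three-stage scaffold and the computations in stages one and three match the paper's proof. The divergence is entirely in stage two, and the obstacle you flag for $k\in\{1,2\}$ is avoided in the paper by an argument you have not used.

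The paper invokes a result of \cite{akopyan2018capacity}: because $H$ is centrally symmetric, the action-minimizing closed characteristic may be taken centrally symmetric. A centrally symmetric trajectory, viewed as an odd map from the circle, must wind an \emph{odd} number of cycles around the origin; this kills $k=2$ (and all even $k$) outright, so no optimization over period-$16$ sequences is ever needed. For $k=1$, central symmetry forces the second half of the cycle to be the negative of the first, so the defining sequence $\{a_i\}$ satisfies $a_{i+4}=a_i$ rather than merely $a_{i+8}=a_i$. With period $4$ the recurrence~\eqref{rec} becomes an overdetermined algebraic system that has a \emph{unique} positive solution $a_i\equiv 1/\sqrt{5}$---this is a direct elimination, not a minimization. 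Thus there is exactly one centrally symmetric single-cycle trajectory, and its action is the $4\arcsin(3/5)$ you computed; no Lagrangian analysis, second variation, or symmetrization is required.

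In short, your plan is not wrong, but the variational attack on $k\in\{1,2\}$ is both the hardest part of your outline and entirely avoidable. Adding the central-symmetry reduction turns the lower bound into a two-line computation for $k=1$ plus Lemma~\ref{lemma:cycle} for $k\ge 3$, exactly as in the paper.
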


\begin{proof}
As it was shown in \cite{akopyan2018capacity}, a centrally symmetric Hamiltonian should have a centrally symmetric minimal closed trajectory. Any such trajectory should consists of odd number of cycles as it may be viewed as an odd map from the circle to $\mathbb R^4$. Thus it is sufficient to consider centrally symmetric trajectories with $1$, $3$, $5$, and so on cycles around the origin.

In case of one cycle we have a defining sequence $\{a_i\}$ with $a_{i + 4} = a_i$ and obtain from \eqref{rec} 
\[
a_2^2 + (a_1 + a_3)^2 = 1 = (a_5 + a_3)^2 + a_4^2,\quad a_2 = a_4,
\]
and similarly $a_1 = a_3$. Therefore for any $n \neq m$ we have 
\[
a_{2n+1} = a_{2m+1} = a,\quad a_{2n} = a_{2m} = b.
\] 
Again using \eqref{rec} or \eqref{surf}: $a^2 + 4b^2 = 1, b^2 + 4a^2 = 1$. This gives 
\[
a_i^2 = \frac{1}{5},\quad \cos{\alpha_i} = \sin{\beta_i} = \frac{1}{\sqrt{5}},\quad  \sin{\alpha_i} = \cos{\beta_i} = \frac{2}{\sqrt{5}},
\] 
\[
\sin(\alpha_i - \beta_i) = \sin{\alpha_i}\cos{\beta_i} - \cos{\alpha_i}\sin{\beta_i} = \frac{2}{\sqrt{5}}\cdot\frac{2}{\sqrt{5}} - \frac{1}{\sqrt{5}}\cdot\frac{1}{\sqrt{5}} = \frac{3}{5},
\]
\[
\alpha_i - \beta_i = \arcsin{\frac{3}{5}},
\] 
and 
\[
S = \sum_{i=1}^4\alpha_i - \sum_{i=1}^4\beta_i = 4\arcsin{\frac{3}{5}} = 2.57\ldots
\]

If the trajectory has more than one cycle then it has at least 3 cycles and its action is at least 3 from Lemma \ref{lemma:cycle}. A back-and-forth trajectory has the action $S = \pi$. Thus we have found a trajectory with the smallest action.

It remains to check Viterbo's conjecture for this minimal trajectory. Indeed, for the body $X = \{H = 1\}$ we have the Ekeland--Hofer--Zehnder capacity
\[
c_{EHZ}(X) = S = 4\arcsin{\frac{3}{5}} = 2.5740\ldots < 2\sqrt2
\]
and 
\[
\text{vol}(X) = 4 = \frac{(2\sqrt2)^2}{2} > \frac{c(X)^2}{2}.
\]
\end{proof}

Note that the trajectory found in $\mathbb R^4$ also gives the estimate for the case $\mathbb{R}^6$, by considering the trajectory placed in the coordinate subspace $\mathbb R^4\subset\mathbb R^6$. Using the formula that we have already mentioned,
\[
\text{vol}(K^{\circ} \oplus_2 K) = \frac{\left((n/2)!\right)^2}{\left(n!\right)^2}\cdot 4^n,
\]
for $n = 3$ we obtain
\[
\text{vol}(K^{\circ} \oplus_2 K) = \pi > \frac{(4\arcsin{\frac{3}{5}})^3}{3!} \ge \frac{c^3}{3!}.
\]
Hence, Viterbo's conjecture holds for $n=3$ as well. However, for higher dimensions such trajectories are insufficient to show the validity of Viterbo's conjecture. In fact, for the capacity (action) we have inequality
\[
c(K^{\circ} \oplus_2 K) \le 4\cdot\left(\frac{((\frac{n}{2})!)^2}{n!} \right)^{\frac{1}{n}},
\]
hence for $n = 4$, 
\[
c(K^{\circ} \oplus_2 K) \le 4\cdot\left(\frac{1}{6}\right)^{\frac{1}{4}} = 2.55\ldots < 2.57\ldots
\]

\subsection{A higher-dimensional example}

We are going to generalize the above two-dimensional example to dimension $n>2$. We will not be able to determine the shortest closed trajectory, but will prove that Viterbo's conjecture holds in this case. 

\begin{theorem}
Viterbo's conjecture holds for the convex body in $\mathbb R^{2n}$ defined by
\[
H(p,q) = \|p\|^2_1 + \|q\|^2_{\infty} \le 1.
\]
\end{theorem}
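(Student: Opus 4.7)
The plan is to reduce Viterbo's inequality $c_{EHZ}(X)^n \le n!\,\vol X$ to a bound on the capacity. Using equation~\eqref{equation:ell-two-sum} together with the volumes of the cube and the cross-polytope,
\[
\vol X = \frac{4^n\,\Gamma(n/2+1)^2}{(n!)^2},
\]
so it suffices to exhibit a closed characteristic of $\partial X$ whose action is at most $R_n := 4\bigl(\Gamma(n/2+1)^2/n!\bigr)^{1/n}$. A direct check shows that $R_n$ decreases monotonically from $\pi$ at $n=1$ down to $2$ as $n\to\infty$.

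For $n=1$ the body $X$ is the unit disk and Viterbo holds with equality. For $n=2,3$ one may embed the trajectory of Theorem~\ref{theorem:two-dim-squares} into a four-dimensional symplectic subspace of $\mathbb R^{2n}$ by setting the remaining $2n-4$ coordinates to zero; its action $4\arcsin(3/5)$ lies below $R_2 = 2\sqrt 2$ and $R_3 \approx 2.657$, as already observed in the paper. For $n \ge 4$, however, $4\arcsin(3/5) > R_n$, so a genuinely higher-dimensional closed characteristic has to be produced.

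For $n \ge 4$ my plan is to generalize the cyclic construction of Lemma~\ref{lemma:cycle}. In each orthant of $p$ and each region where a single coordinate $|q_j|$ is the unique maximum, the Hamilton equations reduce to a planar linear oscillator in two variables while the remaining $2n-2$ coordinates stay frozen, exactly as in the piecewise analysis preceding Lemma~\ref{lemma:cycle}. Imposing invariance of the orbit under a large cyclic subgroup of $S_n$ acting simultaneously on the $p$- and $q$-coordinates reduces the sequence of turning-point values to a single recurrence of the type~\eqref{rec}, together with a closure constraint. Solving at the symmetric fixed point yields a one-parameter family of periodic orbits whose actions decompose as sums of $\arcsin$-type terms, which can then be estimated in the spirit of Lemma~\ref{lemma:cycle}.

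The principal obstacle is the last step: verifying that some trajectory in this symmetric family really has action at most $R_n$ for every $n \ge 4$. Since the paper explicitly concedes that the shortest closed trajectory is not determined in closed form, the plan is merely to exhibit a trajectory meeting the bound, not the extremal one. The bulk of the analytic work therefore lies in carrying out the $\arcsin$-based estimate and showing that its value approaches $2$ at the rate required to keep it below $R_n$ as $n$ grows.
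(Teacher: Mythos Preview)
Your overall strategy coincides with the paper's: compute $\vol X$ via \eqref{equation:ell-two-sum}, reduce Viterbo to producing a single closed characteristic of action at most $R_n = 4\bigl(\Gamma(n/2+1)^2/n!\bigr)^{1/n}$, and build that characteristic by exploiting cyclic symmetry in the coordinates so that the action splits into $\arcsin$-type pieces. So the route is not different; what is missing is the execution.

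Specifically, the proposal stops exactly where the actual work begins. You describe a ``one-parameter family'' obtained from a symmetric fixed point of a recurrence of type~\eqref{rec} and say the action ``can then be estimated in the spirit of Lemma~\ref{lemma:cycle},'' but you neither write down the trajectory, nor compute its action, nor verify the comparison with $R_n$. The paper does all three concretely: it gives explicit initial data
\[
q_i(0)=\frac{n-2(i-1)}{\sqrt{n^2+(n-1)^2}},\qquad p_1(0)=0,\quad p_j(0)=\frac{1}{\sqrt{n^2+(n-1)^2}}\ (j\ge 2),
\]
tracks the piecewise dynamics to show the orbit closes after $4n$ switches, observes that all the angles collapse to a single pair $(\alpha,\beta)$ with $\sin\alpha=\cos\beta=n/\sqrt{n^2+(n-1)^2}$, and obtains the exact action
\[
T=2n\arcsin\frac{2n-1}{n^2+(n-1)^2}.
\]
The remaining inequality $T\le R_n$ is then proved by Stirling's bound $\binom{n}{n/2}\le 2^n/\sqrt{\pi n/2}$, the Taylor estimate $\sin x\ge x-x^3/6$, and a short elementary reduction valid for $n\ge 5$, with $n=1,2,3,4$ checked directly. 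None of these steps is difficult, but each is necessary and none appears in your proposal. In particular, Lemma~\ref{lemma:cycle} gives only a \emph{lower} bound on the action of multi-cycle trajectories; invoking its ``spirit'' does not by itself produce the needed \emph{upper} bound on the action of the constructed orbit.
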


\begin{proof}
Consider the trajectory on the surface $\{H(p,q)=1\}$ starting at the point with coordinates
\begin{eqnarray*}
q_i(0) &=& \frac{n - 2(i-1)}{\sqrt{n^2 + (n-1)^2}},\quad i = 1,\ldots,n\\
p_1(0) &=& 0,\ p_j(0) = \frac{1}{\sqrt{n^2 + (n-1)^2}},\quad j = 2,\ldots,n.
\end{eqnarray*}
Let us write the Hamiltonian equations for this Hamiltonian:
\begin{eqnarray*}
\dot{q_i} &=& \frac{\partial H}{\partial p_i} = 2(|p_1| + \dots + |p_n|)\frac{\partial |p_i|}{\partial p_i},\\
\dot{p_i} &=& -\frac{\partial H}{\partial q_i} = -2\max\limits_{j}|q_j|\frac{\partial \max_{j}|q_j|}{\partial q_i}.
\end{eqnarray*}
If $|q_k| = \max\limits_{j}|q_j|$ then we have
\begin{eqnarray*}
\dot{q_i} &=& 2(|p_1| + \dots + |p_n|)\cdot \text{sgn}p_i,\\
\dot{p_k} &=& -2q_k,\ \dot{p_j} = 0\quad \text{for}\quad j \not= k.
\end{eqnarray*}
In our case $q_1(0) = \max\limits_{j}|q_j|$ and $p_1(0) = 0$, so after the initial moment of time $q_1$ decreases and $q_j$ increases with the same velocity, $p_1$ decreases and $p_j$ does not change. Such motion continues until the moment $t_1$, when $q_1$ stops being the largest in the absolute value and gets equal to $q_2$. At this moment we have
\begin{eqnarray*}
q_1(t_1) &=& \frac{n - 1}{\sqrt{n^2 + (n-1)^2}} = q_2(t_1),\ q_i(t_1) = \frac{n - 2(i-1) + 1}{\sqrt{n^2 + (n-1)^2}}, i = 3,\ldots,n\\
p_1(t_1) &=& \frac{- 1}{\sqrt{n^2 + (n-1)^2}},\ p_j(t_1) = \frac{1}{\sqrt{n^2 + (n-1)^2}}, j = 2,\ldots,n.
\end{eqnarray*}
Then the coordinates $q_1,q_2,\ldots, q_n$ become the largest one by one, arriving to $\frac{n}{\sqrt{n^2 + (n-1)^2}}$ before $q_1$ reaches the value $\frac{- n + 1}{\sqrt{n^2 + (n-1)^2}}$ at the moment $t_{2n}$ and becomes the largest in the absolute value again going to $q_1(t_{2n+1}) = \frac{- n}{\sqrt{n^2 + (n-1)^2}}$. Thereafter the trajectory goes in the centrally symmetric way until the moment $t_{4n}$, when the trajectory returns to the starting point. The projections of the trajectory to $p$ and $q$ spaces for $n = 3$ are shown in Figure \ref{Figure 3}. The graphs for the evolution of the $q_i$ and the $p_i$ in time are shown in Figures \ref{Figure 4}.

% Рисунок 3

\begin{figure}[ht]\center
\begin{tabular}{cc}
\includegraphics[width=80mm]{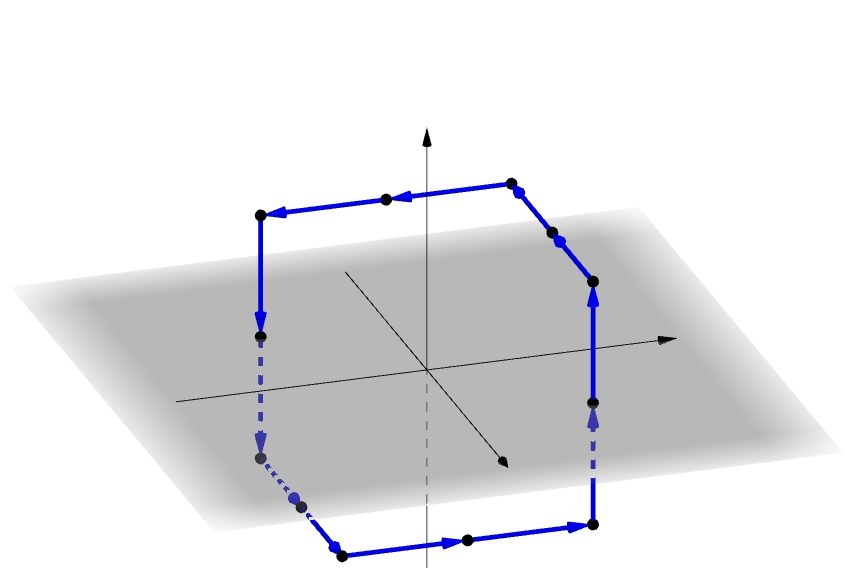}
&
\includegraphics[width=80mm]{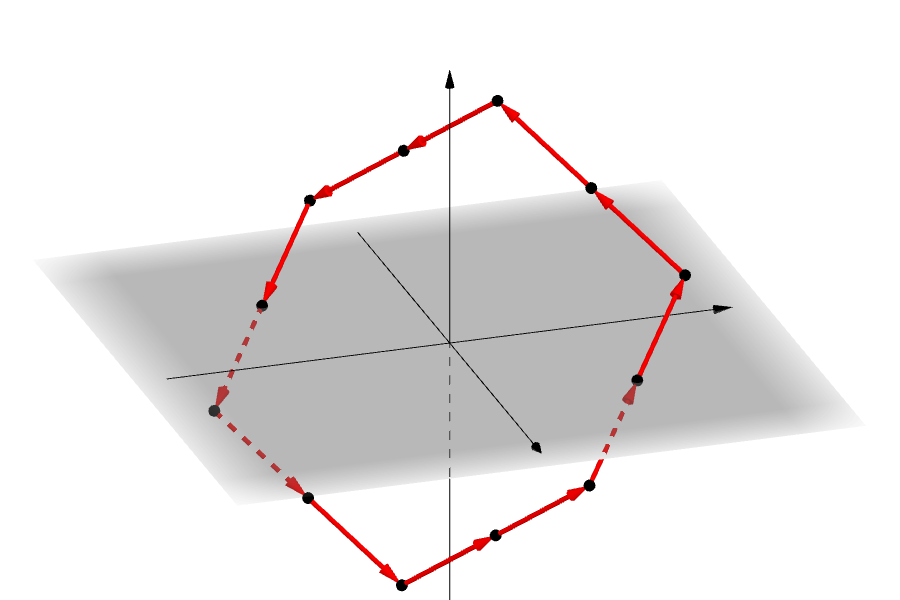}
\end{tabular}
\caption{Projections of the trajectory to $p$ and $q$ spaces.}
\label{Figure 3}
\end{figure}

% Рисунок 4

\begin{figure}[ht]\center
\begin{tabular}{cc}
\includegraphics[width=80mm]{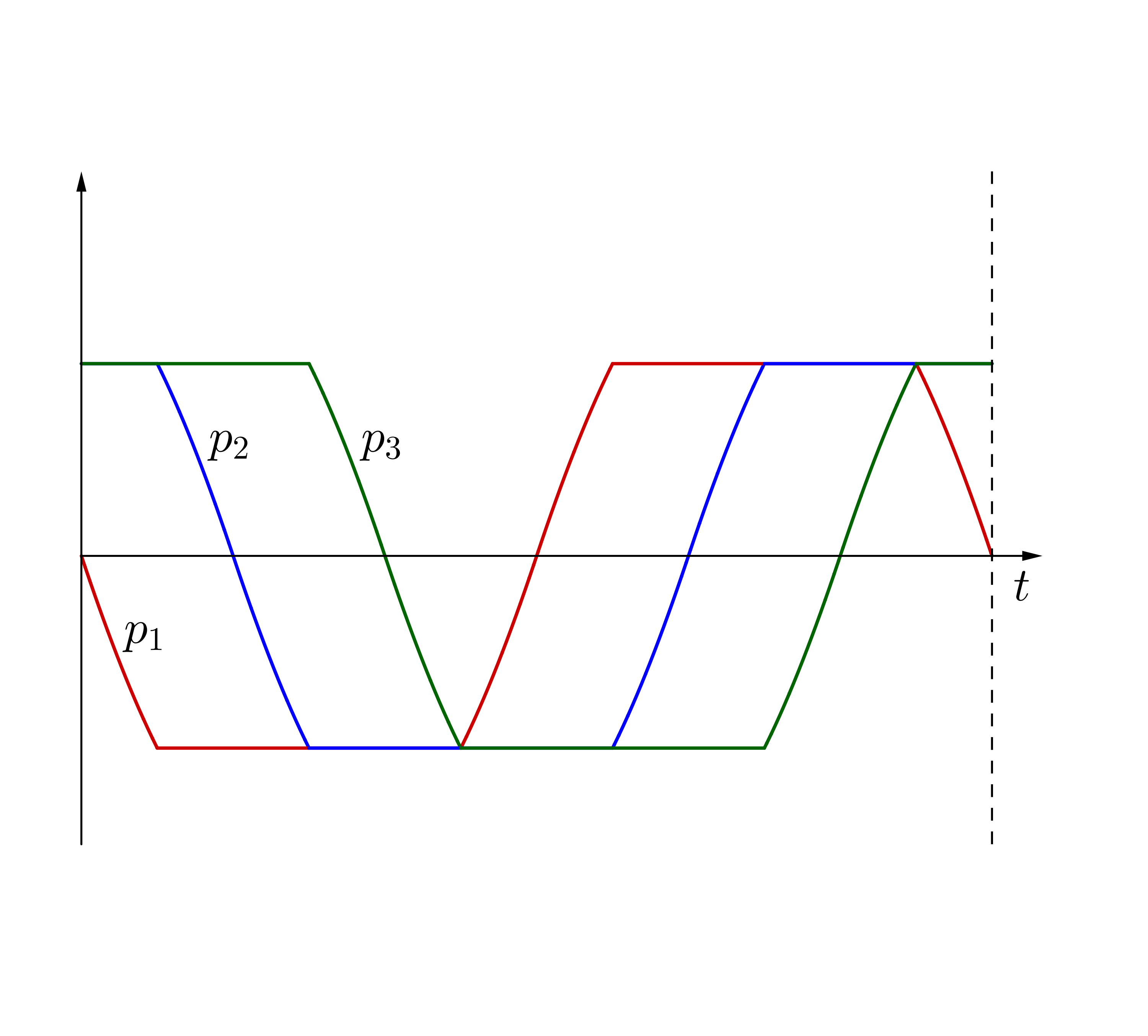}
&
\includegraphics[width=80mm]{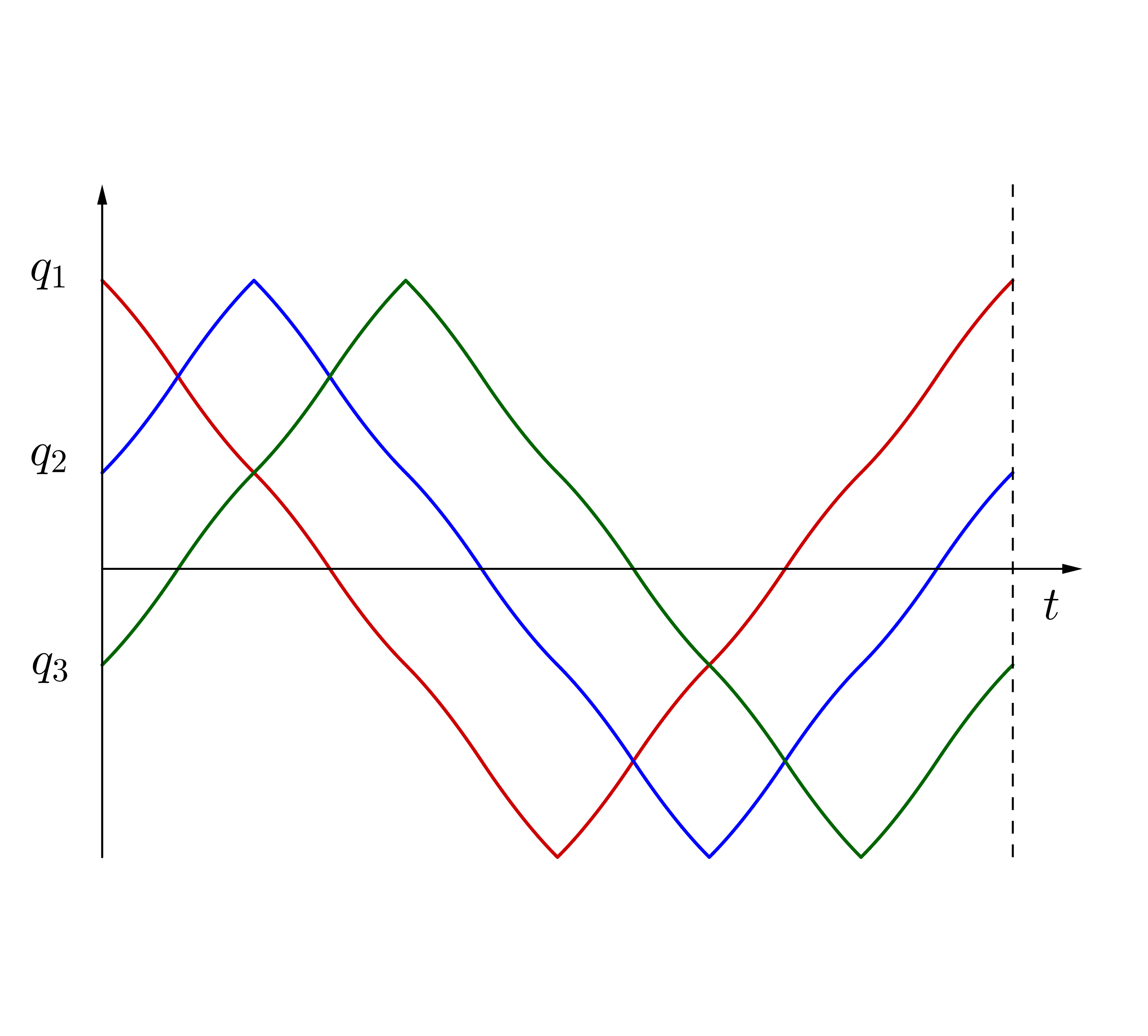}
\end{tabular}
\caption{Time evolution of $p$ and $q$ coordinates.}
\label{Figure 4}
\end{figure}

As in the case $n = 2$, let us introduce the angles $\alpha_i, \beta_i$:
\begin{eqnarray*}
 |q_1(0)| &=& \sin\alpha_1;\\
 |q_1(t_1)| = |q_2(t_1)| &=& \sin\beta_1;\\
 |q_2(t_2)| &=& \sin\alpha_2;\\
 |q_2(t_3)| = |q_3(t_3)| &=& \sin\beta_2;\\
 &\dots & \\
 |q_n(t_{4n-1})| &=& \sin\alpha_{2n};\\
 |q_n(t_{4n})| = |q_1(t_{4n})| &=& \sin\beta_{2n}.
\end{eqnarray*}
However, for our trajectory the angles only have two different values:
\[
\sin\alpha_i = \frac{n}{\sqrt{n^2 + (n-1)^2}} = \cos\beta_i,\quad \sin\beta_i = \frac{n - 1}{\sqrt{n^2 + (n-1)^2}} = \cos\alpha_i.
\]
Thus it is easy to find the period of the trajectory:
\[
T = \sum_{i=1}^{2n}\alpha_i - \sum_{i=1}^{2n}\beta_i = 2n(\alpha - \beta),
\] 
where $\alpha = \alpha_i, \beta = \beta_i$. Eventually, we have an explicit expression for $T$,
\[
T = 2n\arcsin{\frac{2n-1}{n^2 + (n-1)^2}},
\]
and we want to compare it to the value $4 \cdot\left(\frac{((\frac{n}{2})!)^2}{n!} \right)^{\frac{1}{n}}$ that arises from Viterbo's conjecture. In fact, we will prove Viterbo's conjecture for our body $\{H\le 1\}$ in the form:
\begin{equation}
    \label{ineq}
    4\cdot\left(\frac{((\frac{n}{2})!)^2}{n!} \right)^{\frac{1}{n}} \ge 2n\arcsin{\frac{2n-1}{n^2 + (n-1)^2}}.
\end{equation}
First, we get the following inequality 
\begin{equation}
    \label{stirling}
\frac{n!}{((\frac{n}{2})!)^2} \le \frac{2^{n}}{\sqrt{\pi\frac{n}{2}}}.
\end{equation}
Indeed, we can deduce by induction the expression 
\[
\frac{1}{2^n} \cdot \frac{n!}{((\frac{n}{2})!)^2} = \frac{2}{\pi} \int_0^{\frac{\pi}{2}}\cos{x}^n dx,
\] 
and write the inequality 
\[
\frac{2}{\pi} \int_0^{\frac{\pi}{2}}\cos{x}^n dx \le \frac{2}{\pi} \int_0^{\frac{\pi}{2}}\exp{\left(-\frac{n}{2}x^2\right)} dx,
\]
since for $x \in [0, \frac{\pi}{2}]$ we have $\cos{x} \le \exp{\left(-\frac{x^2}{2}\right)}$. We can also evaluate the right hand side 
\[
\frac{2}{\pi} \int_0^{\frac{\pi}{2}}\exp{\left(-\frac{n}{2}x^2\right)} dx < \frac{2}{\pi} \int_0^{\infty}\exp{\left(-\frac{n}{2}x^2\right)} dx = \frac{2}{\pi} \cdot \frac{\sqrt{\pi}}{2}\cdot \sqrt{\frac{2}{n}} = \sqrt{ \frac{2}{\pi n}}.
\]
 Hence, we prove (\ref{stirling}) and it can be rewritten in a more convenient way as
\[
4\cdot\left(\frac{((\frac{n}{2})!)^2}{n!} \right)^{\frac{1}{n}} \ge 2 \cdot \left(\frac{\pi n}{2}\right)^{\frac{1}{2n}}.
\]
In order to prove (\ref{ineq}) it is sufficient to check:
\[
2\cdot \left(\frac{\pi n}{2}\right)^{\frac{1}{2n}} \ge 2n\arcsin{\frac{2n-1}{n^2 + (n-1)^2}}
\Leftrightarrow
\sin\left(\left(\frac{\pi n}{2}\right)^{\frac{1}{2n}}\frac{1}{n}\right) \ge \frac{2n-1}{2n^2 - 2n +1}.
\]
Note that $\sin(x) \ge x - \frac{x^3}{6}$ when $x \ge 0$, therefore
\[
\sin\left(\left(\frac{\pi n}{2}\right)^{\frac{1}{2n}}\frac{1}{n}\right) \ge \left(\frac{\pi n}{2}\right)^{\frac{1}{2n}}\frac{1}{n} - \frac{1}{6} \cdot \left(\frac{\pi n}{2}\right)^{\frac{3}{2n}}\frac{1}{n^3}.
\]
On the other hand 
\[
n \cdot \frac{2n-1}{2n^2 - 2n +1} = \frac{2n^2 - n}{2n^2 - 2n + 1} = 1 + \frac{n - 1}{2n^2 - 2n + 1} \le 1 + \frac{1}{2n}.
\]
So it is sufficient to check the inequality:
\[
\left(\frac{\pi n}{2}\right)^{\frac{1}{2n}} - \frac{1}{6} \cdot \left(\frac{\pi n}{2}\right)^{\frac{3}{2n}}\frac{1}{n^2} \ge 1 + \frac{1}{2n}
\Leftrightarrow
\left(\frac{\pi}{4}\right)^{\frac{1}{2n}} \cdot (2n)^{\frac{1}{2n}} \ge 1 + \frac{1}{2n} + \frac{2}{3} \cdot \left(\frac{\pi}{4}\right)^{\frac{3}{2n}}\frac{(2n)^{\frac{3}{2n}}}{(2n)^2},
\]
or, after the variable change $x = 2n$,
\[
\left(\frac{\pi}{4}\right)^{\frac{1}{x}} \cdot x^{\frac{1}{x}} \ge 1 + \frac{1}{x} + \frac{2}{3} \cdot \left(\frac{\pi}{4}\right)^{\frac{3}{x}}\frac{x^{\frac{3}{x}}}{x^2}.
\]
We can estimate the left hand side
\[
\left(\frac{\pi}{4}\right)^{\frac{1}{x}} \cdot x^{\frac{1}{x}} = \exp\left(\left(\ln{x} - \ln\frac{4}{\pi}\right)\frac{1}{x}\right) \ge 1 + \left(\ln{x} - \ln\frac{4}{\pi}\right)\frac{1}{x},
\]
and reduce the problem to a simple inequality
\[
1 + \left(\ln{x} - \ln\frac{4}{\pi}\right)\frac{1}{x} \ge 1 + \frac{1}{x} + \frac{2}{3} \cdot \left(\frac{\pi}{4}\right)^{\frac{3}{x}}\frac{x^{\frac{3}{x}}}{x^2}
\Leftrightarrow
\ln{x} - \ln\frac{4}{\pi} \ge 1 + \frac{2}{3} \cdot \left(\frac{\pi}{4}\right)^{\frac{3}{x}}\frac{x^{\frac{3}{x}}}{x}.
\]
For $x \ge 10$ we have
\[
\ln{x} \ge \ln{10} > 2,3 > 2 + \ln{\frac{4}{\pi}} \ge 1 + \frac{2}{3} \cdot \left(\frac{\pi}{4}\right)^{\frac{3}{x}}\frac{x^{\frac{3}{x}}}{x} + \ln{\frac{4}{\pi}},
\]
and it remains to prove (\ref{ineq}) only for $n = 1, 2, 3, 4$:
\[
 4\cdot\left(\frac{((\frac{1}{2})!)^2}{1!} \right)^{\frac{1}{1}} = \pi = 2\arcsin{\frac{1}{1}},
 \]
\[
4\cdot\left(\frac{((\frac{2}{2})!)^2}{2!} \right)^{\frac{1}{2}} = 2\sqrt{2} > 2,6 > 4\arcsin{\frac{3}{5}},
\]
\[
4\cdot\left(\frac{((\frac{3}{2})!)^2}{3!} \right)^{\frac{1}{3}} = (6\pi)^\frac{1}{3} > 2,66 > 6\arcsin{\frac{5}{13}},
\]
\[
4\cdot\left(\frac{((\frac{4}{2})!)^2}{4!} \right)^{\frac{1}{4}} > 2,55 > 8\arcsin{\frac{7}{25}}.
\]
Thus we have proved \eqref{ineq} for all $n$, therefore this trajectory satisfies Viterbo's inequality and the conjecture holds for this Hamiltonian.
\end{proof}

\bibliography{../Bib/karasev}
\bibliographystyle{abbrv}
\end{document}